\documentclass[12pt,scrartcl,a4paper]{article}

\usepackage{a4wide}
\usepackage{setspace}

\usepackage[affil-it]{authblk}

\RequirePackage[colorlinks,citecolor=blue,urlcolor=blue]{hyperref}

\usepackage{amsfonts,amsthm,amsmath,amssymb,mathtools,bbm,mathrsfs,epsfig}
\usepackage{thmtools}

\usepackage{multirow}
\usepackage[authoryear,round]{natbib}
\usepackage{booktabs}
\usepackage{float}
\usepackage{tabularx}
\usepackage{enumitem}
\usepackage{ amssymb }
\usepackage{subcaption}
\usepackage{capt-of}
\usepackage{color}
\usepackage{cancel}
\usepackage{makecell}
\numberwithin{equation}{section}

\newcommand{\R}{\mathbb{R}}

\newcommand{\N}{\mathbb{N}}

\newcommand{\E}{\mathrm {E}}

\newcommand{\PPs}{\mathcal{P}}
\newcommand{\PPf}{\mathfrak{P}}

\newcommand{\Lp}{\mathrm{L}}

\renewcommand{\d}{\mathrm{d}}
\newcommand{\1}{\mathbbm{1}}

\newcommand{\scalar}[2]{\mathchoice{\left\langle #1, #2\right\rangle}{\langle #1, #2\rangle}{\langle #1, #2\rangle}{\langle #1, #2\rangle}}

\newcommand{\I}{{ \operatorname  I}}

\declaretheorem[refname = {Theorem}]{theorem}

\declaretheorem[]{example}

\def\AA{ \mathfrak{A} }

\def\BB{ \mathfrak{B} }

\renewcommand{\epsilon}{\varepsilon}
\renewcommand{\rho}{\varrho}
\renewcommand{\theta}{\vartheta}

\newcommand\blfootnote[1]{%
  \begingroup
  \renewcommand\thefootnote{}\footnote{#1}%
  \addtocounter{footnote}{-1}%
  \endgroup
}

\makeatletter
\newcommand{\vast}{\bBigg@{4}}
\newcommand{\Vast}{\bBigg@{5}}
\makeatother

\begin{document}
\title{A unified approach for testing in Hilbert spaces on incomplete data}
\author[1,2,3]{Daniel Gaigall$^*$}
\author[1]{Philipp Wübbolding}

\affil[1]{Faculty 09 - Medical Engineering and Technomathematics, FH Aachen - University of Applied Sciences, Heinrich-Mußmann-Straße 1, 52428 Jülich, Germany}

\affil[2]{Institute for Data-driven Technologies, FH Aachen - University of Applied Sciences, Heinrich-Mußmann-Straße 1, 52428 Jülich, Germany}

\affil[3]{House of Insurance, Leibniz University Hannover, Welfengarten 1, 30167 Hannover, Germany}

\date{}

\maketitle

\blfootnote{$^*$Corresponding author. Email address: gaigall@fh-aachen.de, daniel.gaigall@insurance.uni-hannover.de}

\begin{abstract}
We consider statistical testing on the basis of incomplete observations with values in a  separable Hilbert space, where the dimension is possibly large or even infinite. The general Hilbert space setting allows various data types as they arise in modern applications, in particular high dimensional and functional data. Possible  Hilbert space testing problems are  goodness-of-fit, symmetry, homogeneity and independence. We present an approach for modeling incomplete data that covers several problems in practice, e.g., ultra high dimensional random vectors with missing entries or partially observed stochastic processes. We identify a specific structure (independent and identically distributed) in the incomplete data that enables the analysis of  statistical procedures with the help of suitable mathematical results (e.g., laws of large numbers and central limit theorems). Additionally, a general and novel concept for testing different hypotheses in this situation is suggested and sketched for the example of testing goodness-of-fit for normality.

\textit{Keywords: bootstrap, functional data, goodness-of-fit, high dimensional data, Hilbert space, hypothesis testing, projection}  

\textit{MSC2010 classification: 62G10, 62G09} 

\end{abstract}

\section{Introduction}

To allow a broad range of data types, such as multivariate data, high-frequency data, high dimensional data and functional data, we are interested in statistical inference on the basis of random variables with values in a separable Hilbert space, where the underlying dimension is possibly large or even  infinite. In this context, the treatment of various testing problems is of practical interest. Due to the fact that numerous established models suppose normality of the data, an application-oriented  problem is testing   goodness-of-fit and the hypothesis of Gaussianity, that can be considered in the multivariate or even infinite dimensional case analogously to  univariate situations. Furthermore, the testing problem of symmetry and, in the paired  or multi-sample case,  the testing problems of independence and (marginal) homogeneity are of particular interest in applications. 

\medskip
For motivation, let us consider some application problems. At first, we have that  several entries are missing in the random vectors measured, such as  in \citet{10.1093/biostatistics/kxx039}, where concentrations of potent broadly neutralizing antibodies are considered that can prevent a HIV infection at genital surfaces in males. Here, biological samples were obtained from both inner and outer foreskin from male persons and build a bivariate data set. Moreover, a monotone missingness (dropout) in random vectors can occur, such as in  \citet{Goldstein} and  \citet{DETKE2004457}, where the degree of depression before and after intake of a specific drug or a placebo is considered. Now,several patients left the study early, resulting in data sets with a low and high dropout. Furthermore, suppose partially observed functions (i.e., pieces of functions are missing)  are given as observations, similar as in  \citet{repec:bla:jorssb:v:77:y:2015:i:4:p:777-801} for ambulatory blood pressure monitoring, where participants wear a calibrated automatic device  to record systolic and diastolic blood pressure and heart rate. Some intervals have not been measured for due to  the participant’s discomfort resulting in the removal of the device or  the failure of the device to take measurements. 

\medskip
For taking  practical problems that arise with such data types into account, we consider incomplete observations in our general Hilbert space setting. We treat different models with incomplete data that cover those situations. Furthermore, the problems of ultra high dimensional random vectors and smoothed functions can  also be embed in the incomplete data approach. Ultra high dimensional random vectors arise, e.g., in the context of the investigation of the association between gut microbiome and obesity in \citet{Escobar}, where rDNA sequences are analyzed. Smoothed functions arise naturally with expansions of functions with respect to a certain basis, such as it is discussed in  \citet{10.1063/1.2981526}, where the density-matrix  of a spherically symmetric system is  expanded as a Fourier-Legendre series of Legendre polynomial with pplications to harmonically trapped electron pairs, i.e. Moshinsky's and Hooke's atoms.

\medskip
A suitable approach  for testing on the basis of complex data, such as functional observations, is to work with  projections. In the context of testing goodness-of-fit, \citet{Random_projections} and \citet{CUESTAALBERTOS20074814} apply random projections to obtain real-valued observations and finally Kolmogorov-Smirnov type tests. \citet{RePEc:eee:jmvana:v:100:y:2009:i:4:p:753-766} extend this procedure to  more general spaces and  \citet{RePEc:spr:testjl:v:19:y:2010:i:3:p:537-557} apply the idea  to an ANOVA test for functional data.  Moreover, a bootstrap based goodness-of-fit test for functional data is proposed by  \citet{9e51bff2-c3de-37cf-b969-bf2517f82c90}, where the null hypothesis 
consists of a parametric family of distributions, and \citet{https://doi.org/10.1002/jae.2846} treat an extension to the multi-sample case. \citet{DitzhausGaigallConsistentTest}, \citet{DisthausGaigallMarginalHomo} and \citet{GaigallLiangWu-HilbertIndependence} develop the projection idea for the testing problems of goodness-of-fit, marginal homogeneity and independence in separable Hilbert spaces and  propose tests  by integrating over all real-valued projections. A review on goodness-of-fit tests and testing in regression models with functional data is given in \citet{GonzlezManteiga2022ARO}.

\medskip
The concept of the characteristic function is also available in Hilbert spaces. Moreover, the characteristic functional uniquely determines the distribution, see \citet{LahaRohatgi}. For that reason, test statistics based on the empirical characteristic function, and related $L^2$-distances,  promise consistent testing procedures.  This testing idea goes back to the common Baringhaus-Henze-Epps-Pulley (BHEP) test, see \citet{BaringhausHenze}, that extends the idea of \citet{Epps-Pulley} to the multivariate case and serves as a powerful goodness-of-fit test for multivariate normality. The common BHEP test requires complete random vectors as observations and is not applicable in higher dimension, i.e., if the dimension of the random vectors is much larger than the sample size. A review of tests for multivariate normality with emphasis on  $L^2$ type statistics is given in \citet{RePEc:spr:testjl:v:29:y:2020:i:4:d:10.1007_s11749-020-00740-0}. In \citet{HenzeGamero}, the BHEP test is generalized  to the separable Hilbert space case. Based on the latter work,  \citet{CDHH} provide an extension to a test for subfamilies of Gaussian distributions and \citet{GaigallWuebbolding_gBM-Test} propose a BHEP type goodness-of-fit test for the geometric Brownian motion in the functional data setting. 

\medskip
The idea of the BHEP test can  be adopted for the treatment of other testing problems; examples in the case of real-valued observations are given in \citet{RePEc:spr:stpapr:v:57:y:2016:i:4:d:10.1007_s00362-016-0760-0} for a more general goodness-of-fit problem for semiparametric and parametric hypotheses, in \citet{BakirovRizzoSzekely-Inditest-06} for the testing problem of independence or in \citet{HENZE2003275} for the testing problem of symmetry.  Based on characteristic functions, testing procedures  for  Hilbert space valued observations are investigated in \citet{dette2024newenergydistancesstatistical}. Except for the univariate case,  resampling procedures are often suggested to obtain critical values of a nonparametric test. \citet{Romano-Bootstrap-Rand-nonpar-89} presents bootstrap procedures for different testing problems in nonparametric statistics. More examples of parametric and nonparametric bootstrap procedures for the background of the testing problems of goodness-of-fit, symmetry, exchangeability, and independence for random vectors are given in \citet{Gaigall03102021}, \citet{BARINGHAUS2023105154} and  \citet{Gaigall-ApplicabilityServeralTests}. Considering nonparametric hypothesis testing on the basis of incomplete data,  \citet{CuparicMilosevic-adapt-impute} discuss a model-free imputation procedure for testing goodness-of-fit with randomly right-censored data, where the focus of \citet{Aleksić09122024} is especially on testing  multivariate normality on the basis of missing values.  \citet{Gaigall-MargHom-Incompl-data} treats the fully nonparametric testing problem of marginal homogeneity on the basis of possibly incomplete paired data with values in $\R\times \R$ and introduces a new bootstrap procedure for handling this special situation.   A BHEP test for multivariate normality on incomplete data vectors is developed and investigated in \citet{g+w-neu}. First ideas how to deal with incomplete data in  general Hilbert spaces are given in \citet{DitzhausGaigallConsistentTest} and \citet{GaigallLiangWu-HilbertIndependence}. Nonparametric density estimation on the basis of noisy functional data is the topic in \citet{0fdf3827-ba20-3d40-8bdd-32b0388081ee}.

\medskip
In Section \ref{testing}, testing in Hilbert spaces on incomplete data is discussed. We consider  a unified Hilbert space approach that covers various  applications and we identify and use a specific structure (independent and identically distributed) in the incomplete data that enables the analysis of  statistical procedures with the help of suitable mathematical results (e.g., laws of large numbers and central limit theorems). Additionally, a general and novel concept for testing different hypothesis in this situation is suggested and explained. In detail,  the  Hilbert space setting is introduced in Subsection \ref{gen}. Subsection \ref{mod} treats modelling incomplete data in the Hilbert space. Testing problems in Hilbert spaces are presented in Subsection \ref{tp}. Subsection \ref{proj}  investigates a suitable  projection approach. The testing idea is outlined in Subsection \ref{idea}. For illustration, Subsection \ref{exanorm}  gives an example for testing goodness-of-fit for  normality on incomplete data  from  \citet{g+w-neu}. Section \ref{outlook} closes with motivations and hints  for further  research.

\section{Testing in Hilbert spaces on incomplete data}\label{testing}

\subsection{General Hilbert space setting}\label{gen}

We consider a general Hilbert space approach, such as in \citet{DitzhausGaigallConsistentTest}, \citet{DisthausGaigallMarginalHomo} and \citet{GaigallLiangWu-HilbertIndependence}. 
Let $H$ be a separable Hilbert space of arbitrary dimension. The Hilbert space is equipped with a scalar product $\scalar{x}{x'}$, $(x,x')\in H^2$, that induces a norm $\lVert x\rVert=\scalar{x}{x}^\frac{1}{2}$, $x\in H$. Let $(e_j)_{j\in J}$ be a countable orthonormal basis of the separable Hilbert space $H$, where $e_j$ is the $j$-th basis element,  $j\in J$, and the index set is given by $J=\{1,\dots,d\}$ for some $d\in\N$ in the finite dimensional case  or $J=\N$ in the infinite dimensional case. For $x\in H$ and $j\in J$, we denote by $x(j)=\scalar{x}{e_j}$ the $j$-th coordinate of $x$. On $H$, we have open sets in the norm topology that generate the Borel $\sigma$-field $\BB(H)$, which subsequently allows for the notion of random variables taking values in $H$. Let $(\Omega,\AA,P)$ be a probability space and $X:\Omega\rightarrow H$ be a random variable with values in $H$. Denote by $P^X$  the distribution of $X$.  In what follows, let $X_i$, $i=1,\dots,n$, be a sample of size $n\in\N$ from $P^X$, that are independent copies of $X$. The Hilbert space approach allows a broad range of data types. By resorting to the Hilbert space setting, we are able to deal with  various data types. Examples are given as follows.

\begin{example}
\label{ex:Hilbertraeume}
a) Random vectors or random sequences: Suppose $d\in\N$ and that we deal with random vectors $X_i=(X_i(1),\dots,X_i(d))$, $i=1,\dots,n$, with  values in $\R^d$. Define  $J=\{1,\dots,d\}$ and  $\R^J=\{f; f:J\rightarrow\R\}$. Moreover,  denoting by $\delta_j$  the Dirac delta or the Dirac measure in $j\in J$, we introduce with $\eta_J=\sum_{j\in J}p_j\delta_j$  a measure on $\PPf(J)$, the power set of $J$, where   $p_j>0$ for all $j\in J$  and $\sum_{j\in J}p_j<\infty$. Then the random vectors can be seen as random variables with values in  the separable Hilbert space 
    \begin{equation*}
        H= \ell^2(J,\PPf(J),\eta_J),
    \end{equation*}
that is the space of all $x\in\R^J$ that satisfy $\sum_{j\in J}x(j)^2p_j<\infty$, equipped with the scalar product $\scalar{x}{x'}=\sum_{j\in J}x(j)x'(j)p_j$, $(x,x')\in H^2$,  and $e_j=\delta_j/\sqrt{p_j}$, $j\in J$, defines a countable orthonormal basis.  In fact, this point of view enables an extension to the infinite dimensional case with $J=\N$ and $X_i=(X_i(j))_{j\in \N}$, $i=1,\dots,n$, as random sequences. Note that, in the finite dimensional case, the choice $p_1=\dots=p_d=1$ leads to the standard scalar product in $\R^d$.

\medskip    
b) Functional data: Let $A\subset \R$ be a  non-degenerated and bounded interval and suppose we deal with stochastic processes $X_i$, $i=1,\dots,n$, with values in the separable Hilbert space 
$$H=\Lp^2 (A,\BB(A),\lambda_{A}),$$
that is the space of equivalence classes of measurable functions $x:A\rightarrow\R$ that satisfy $\int_A x^2\d \lambda_{A}<\infty$. The scalar product is given by $\scalar{x}{x'}=\int_A x x'\d \lambda_{A}$, $(x,x')\in H^2$. A countable orthonormal basis $(e_j)_{j\in\N}$ of $H$ is available by  the trigonometric orthonormal basis, for instance. 
   
\end{example}

\subsection{Modelling incomplete data in the Hilbert space setting}\label{mod}

Expanding our random variable  with respect to the basis  we obtain
\begin{equation*}
    X=\sum_{j\in J}\scalar{X}{e_j}e_j=\sum_{j\in J}X(j)e_j.
\end{equation*}
The latter representation  motivates  to model incomplete data with the help of another random variable $I:\Omega\rightarrow H$ that models the missing mechanism. This random variable has the property that $I(j)=\scalar{I}{e_j}$ takes only values in $\{0,1\}$ for all $j\in J$, i.e.,  $I(j)$ is a binary (Bernoulli) random variable. We introduce the  Hadamard product of $I$ and $X$ by 
\begin{equation*}
    I\odot X = \sum_{j\in J} \scalar{I}{e_j}\scalar{X}{e_j}e_j=  \sum_{j\in J} I(j)X(j)e_j.
\end{equation*}
In the finite dimensional case of random vectors, the notation ``$\odot$'' coincides with the common Hadamard vector (or matrix) product. Our incomplete data are given by 
$$(I_i,I_i\odot X_i),~i=1,\dots,n,$$
that are independent copies of $(I,I\odot X)$ with unknown distribution $P^{(I,I\odot X)}$. This framework enables to address a broad range of incomplete data types. Clearly, the complete data case is included by setting $I(j)=1$, $j\in J$. Moreover, monotone missingness (dropout) effects  are  covered. Moreover, the case of missing entries in  random vectors   is  covered, such as in \citet{Gaigall-MargHom-Incompl-data} for the testing problem of marginal homogeneity on the basis of paired data with values in $\R\times \R$ or in  \citet{g+w-neu}, where this data framework is  used for testing goodness-of-fit for multivariate normality on the basis of random vectors in $\R^d$. Another important setting included is given by partially observed functions, i.e., pieces of the functions observed
are missing. Furthermore, the problem that  smoothed functions (instead of the true un-smoothed functions) are given as observations can  also be embed in this  incomplete data approach. The same applies to the case of ultra high dimensional random vectors;  here, we have $d\gg n$ and we consider a dimension $d=d_n$ depending on the sample size such that  asymptotics of our testing procedures can be studied in the case that  the dimension $d_n\to\infty$ tends to infinity as the sample size $n\to\infty$ tends to infinity; compare with \citet{GaigallLiangWu-HilbertIndependence}. The following examples give more details.

\begin{example}
\label{ex:Incompleteness}
a) Missing entries in   random vectors: Consider the case of random vectors $X_i=(X_i(1),\dots,X_i(d))$, $i=1,\dots,n$, with values in $H= \ell^2(\{1,\dots,d\},\PPf(\{1,\dots,d\}),\eta_{\{1,\dots,d\}})$ and   orthonormal basis $(e_j)_{j\in \{1,\dots,d\}}$ in Example \ref{ex:Hilbertraeume} a).   Then our incomplete observations  $(I_i,I_i\odot X_i)$, $i=1,\dots,n$,   satisfy
    \begin{equation*}
       I_i\odot X_i=\sum_{j\in J}I_i(j)X_i(j)e_j= \sum_{\substack{j\in\{1,\dots,d\}\\I_{i}(j)=1}}X_i(j)e_j,
    \end{equation*}
that are  random vectors $(I_i(1)X_i(1),\dots,I_i(d)X_i(d))$   with $I_i(j)X_i(j)=X_i(j)$ if $I_{i}(j)=1$ and $I_i(j)X_i(j)=0$ if $I_{i}(j)=0$ and with values again in $H$. This is a possible specification  in our incomplete data approach that models   missing entries in   random vectors.

\medskip
b) Monotone missingness (dropout) in random vectors:  Consider  the case of random vectors $X_i=(X_i(1),\dots,X_i(d))$, $i=1,\dots,n$, with values in $H= \ell^2(\{1,\dots,d\},\PPf(\{1,\dots,d\}),\eta_{\{1,\dots,d\}})$ and   orthonormal basis $(e_j)_{j\in \{1,\dots,d\}}$ in Example \ref{ex:Hilbertraeume} a). Let $D_i$, $i=1,\dots,n$, be independent and identically distributed random variables with values in $\{1,\dots,d\}$. Set $I_{i}(1)=\dots=I_{i}(D_i)=1$ and  $I_{i}(D_i+1)=\dots=I_{i}(d)=0$, $i=1,\dots,n$. Then our incomplete observations  $(I_i,I_i\odot X_i)$, $i=1,\dots,n$,   satisfy
    \begin{equation*}
       I_i\odot X_i=\sum_{j\in J}I_i(j)X_i(j)e_j= \sum_{j=1}^{D_i}X_i(j)e_j,
    \end{equation*}
that are  random vectors $(X_i(1),\dots,X_i(D_i),0,\dots,0)$  with values again in $H$. This is a possible specification  in our incomplete data approach that models  monotone missingness (dropout) in   random vectors.

\medskip
c) Partially observed functions: Let $A\subset \R$ be a non-degenerated and  bounded interval and consider stochastic processes $X_i$, $i=1,\dots,n$, with values in  $H=\Lp^2 (A,\BB(A),\lambda_{A})$   introduced in Example \ref{ex:Hilbertraeume} b). Suppose we have  a partition of $A$ in  $k\in\N$ intervals. For the sake of clarity, we consider only the case of $k=2$ intervals $A_1\subset A$ and $A_2\subset A$ with $A_1\cap A_2=\emptyset$  and $A_1\cup A_2=A$. Let  $(e_{j,1})_{j\in \N}$ and $(e_{j,2})_{j\in \N}$ be orthonormal bases  of  $H_1=\Lp^2 (A_1,\BB(A_1),\lambda_{A_1})$ and $H_2=\Lp^2 (A_2,\BB(A_2),\lambda_{A_2})$. Denoting by ``$\1$'' the indicator and  writing  $e_{j,1}=e_{j,1}\1_{A_1}$ and $e_{j,2}=e_{j,2}\1_{A_2}$, these functions can be extended to functions in $H$ in the obvious way. Defining the functions  $e_{2j-1}=e_{j,1}\1_{A_1}$ and $e_{2j}=e_{j,2}\1_{A_2}$, $j\in \N$, it is     $(e_j)_{j\in \N}$ an orthonormal basis of $H$.  Let  $I_{i}(1)=I_{i}(3)=\dots$ and $I_{i}(2)=I_{i}(4)=\dots$, $i=1,\dots,n$.  Then our incomplete observations  $(I_i,I_i\odot X_i)$, $i=1,\dots,n$,   satisfy
    \begin{equation*}
       I_i\odot X_i=\sum_{j\in J}I_{i}(j)X_i(j)e_j= I_{i}(1)\1_{A_1}\sum_{j=1}^{\infty}X_i(2j-1)e_{j,1}+I_{i}(2)\1_{A_2}\sum_{j=1}^{\infty}X_i(2j)e_{j,2},
    \end{equation*}
that are stochastic processes $I_i\odot X_i$ with values again in $H$, where pieces of the functions are missing. This approach covers the case of partially observed functions.

\medskip
d) Ultra high dimensional random vectors: Consider random sequences
 $X_i=(X_i(j))_{j\in \N}$, $i=1,\dots,n$,  with values in    $H= \ell^2(\N,\PPf(\N),\eta_{\N})$ and  orthonormal basis $(e_j)_{j\in \N}$ introduced in Example \ref{ex:Hilbertraeume} a). Let $d=d_n\in\N$ be a dimension depending on $n$ with $d_n\to \infty$ as  $n\rightarrow\infty$. Define  $I_{i}(j)=I_{i,n}(j)=1$ if $j\le d_n$ and $I_{i}(j)=I_{i,n}(j)=0$ if $j> d_n$, $i=1,\dots,n$, $j=1,\dots,d_n$. Then our incomplete observations  $(I_i,I_i\odot X_i)=(I_{i,n},I_{i,n}\odot X_i)$, $i=1,\dots,n$, satisfy
    \begin{equation*}
      I_{i,n}\odot X_i=\sum_{j\in J}I_{i,n}(j)X_i(j)e_j= \sum_{j=1}^{d_n}X_i(j)e_j,
    \end{equation*}
that  are random vectors $(X_i(1),\dots,X_i(d_n))$  with values in the finite dimensional separable Hilbert space  of dimension $d_n$ expanded by the orthonormal basis $(e_j)_{j\in \{1,\dots,d_n\}}$. This approach can cover the  case of ultra-high dimensional random vectors with $d_n\gg n$.

\medskip
e) Smoothed functions:  Consider stochastic processes $X_i$, $i=1,\dots,n$, with values in  $H=\Lp^2 (A,\BB(A),\lambda_{A})$  and  orthonormal basis $(e_j)_{j\in \N}$ introduced in Example \ref{ex:Hilbertraeume} b). Let $d\in\N$ be a dimension and define  $I_{i}(j)=1$ if $j\le d$ and $I_{i}(j)=0$ if $j> d$. Then our incomplete observations  $(I_i,I_i\odot X_i)$, $i=1,\dots,n$,   satisfy
    \begin{equation*}
       I_i\odot X_i=\sum_{j\in J}I_{i}(j)X_i(j)e_j= \sum_{j=1}^{d}X_i(j)e_j,
    \end{equation*}
that are stochastic processes  $I_i\odot X_i$ with values in the finite dimensional separable Hilbert space  of dimension $d$ expanded by the orthonormal basis $(e_j)_{j\in \{1,\dots,d\}}$. This approach covers the case of smoothed functions.

\end{example}

\subsection{Testing problems in Hilbert spaces}\label{tp}

There are different testing problems that are of particular interest in applications. The testing problems are  formulated for the underlying distributions of the random variables with values in the Hilbert space. On the basis of the incomplete data $(I_i,I_i\odot X_i)$, $i=1,\dots,n$, introduced above, we consider the testing problem of goodness-of-fit whether the underlying distribution $P^X$ belongs to a given family of distributions $\PPf^X=\{P^X_\vartheta;\vartheta\in\Theta\}$, where $\PPf=\{P_\vartheta;\vartheta\in\Theta\}$ is a family of probability measures on $\AA$ and $\Theta$ is a non-empty parameter set, that is the testing problem
\begin{equation*}
    H_0: P^X\in\PPf^X~~\text{vs.}~~ H_1:P^X\notin\PPf^X.
\end{equation*}
For example, $\PPf^X$ could be the family of multivariate normal distributions, such as in  \citet{g+w-neu}, where the case of incomplete data vectors is treated. Another example of $\PPf^X$ is a (sub-)family of Gaussian distributions, such as in \citet{GaigallWuebbolding_gBM-Test} for the geometric Brownian motion, where the complete functional data setting is treated. Another interesting one-sample testing problem that can be treated  on the basis of the incomplete data $(I_i,I_i\odot X_i)$, $i=1,\dots,n$, is the testing problem of symmetry, that is 
\begin{equation*}
    H_0: P^X=P^{-X}~~\text{vs.}~~H_1: P^X\not = P^{-X}.
\end{equation*}
In the case of complete data with values in $\R^d$, $d\in\N$, the testing problem of symmetry is treated in  \citet{Gaigall-ApplicabilityServeralTests}. A testing problem of interest in the two-sample case is the testing problem of  homogeneity. Here, we consider two samples of incomplete data $(I_i,I_i\odot X_i)$, $i=1,\dots,n$, and $(I_i',I_i'\odot X_i')$, $i=1,\dots,n'$, with sample sizes $n$ and $n'$. The  related underlying distributions  of the random variables  $X_i$, $i=1,\dots,n$, and $X_i'$, $i=1,\dots,n'$, are denoted by $P^X$ and $P^{X'}$. Supposing that the two samples of incomplete data are independent, the testing problem of homogeneity is given by 
\begin{equation*}
    H_0:P^X= P^{X'}~~\text{vs.}~~H_1:P^X\not = P^{X'}.
\end{equation*}
In the case $n=n'$ and that we have instead of two samples of incomplete data a paired sample of incomplete data $((I_i,I_i\odot X_i),(I_i',I_i'\odot X_i'))$, $i=1,\dots,n$, we deal with the testing problem of marginal homogeneity, such as in \citet{DisthausGaigallMarginalHomo} for complete  paired data with values in a  Hilbert space or in \citet{Gaigall-MargHom-Incompl-data} for incomplete paired data with values in $\R\times \R$. A related testing problem, treated again on the basis of a paired sample of incomplete data $((I_i,I_i\odot X_i),(I_i',I_i'\odot X_i'))$, $i=1,\dots,n$, is the testing problem of independence, that is the testing problem
\begin{equation*}
    H_0:P^{(X,X')}=P^X\otimes P^{X'}~~\text{vs.}~~H_1:P^{(X,X')}\not =P^X\otimes P^{X'}.
\end{equation*}
Here, $P^{(X,X')}$ denotes the joint distribution of $(X,X')$ and ``$\otimes$'' stands for the product measure. The testing problem of independence is also treated in  \citet{Gaigall-ApplicabilityServeralTests} in the case of complete data with values in $\R^d\times \R^d$, $d\in\N$, and in \citet{GaigallLiangWu-HilbertIndependence} for random variables in a Hilbert space. The mentioned testing problems can be generalised to the multi-sample case. Of course, the  treatment of additional testing problems is possible.

\subsection{Projection approach}\label{proj}

For handling with high or even infinite dimensions, a projection idea for Hilbert spaces is developed in
\citet{DitzhausGaigallConsistentTest}, \citet{DisthausGaigallMarginalHomo}, and \citet{GaigallLiangWu-HilbertIndependence}. The approach uses projections from the Hilbert space to the real line. We modify this projection idea for the incomplete data case. Later, we will see that the idea has potential for the construction of powerful tests based on incomplete data in our Hilbert space setting. We introduce 
 $$\PPs(H)=\bigg\{\pi;\pi:H\rightarrow\pi(H),\pi(\cdot)=\sum_{j\in J'}\scalar{\cdot}{e_j}e_j, J'\subset J, |J'|<\infty\bigg\},$$
 that is  a set of projections from $H$ onto the finite dimensional subspaces  of $H$. 

\begin{theorem}
    \label{prop:Equal-missingData}
    Consider a separable Hilbert space $H$ of arbitrary dimension. For all $\pi\in\PPs(H)$  let  the probability that all components of $\pi(I)$ are equal to one be different from zero and let $X$ and $I$ as well as  $X'$ and $I$ be independent. Then we have the equivalence
 $$P^X= P^{X'}~\Leftrightarrow~~\forall \pi\in\PPs(H): P^{\pi(I\odot X)}= P^{\pi(I\odot X')}.$$ 
\end{theorem}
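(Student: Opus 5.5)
The direction ``$\Rightarrow$'' is immediate. If $P^X=P^{X'}$, then the independence of $X$ and $I$ and of $X'$ and $I$ gives $P^{(I,X)}=P^I\otimes P^X=P^I\otimes P^{X'}=P^{(I,X')}$. The map $(i,x)\mapsto \pi(i\odot x)$ is measurable from $H\times H$ to $\pi(H)$, because it depends only on the finitely many coordinates $i(j),x(j)$, $j\in J'$. Hence the image measures coincide: $P^{\pi(I\odot X)}=P^{\pi(I\odot X')}$ for every $\pi\in\PPs(H)$.

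For ``$\Leftarrow$'' I reduce to finite-dimensional marginals. Since $H$ is separable, the Borel $\sigma$-field $\BB(H)$ is generated by the coordinate maps $x\mapsto x(j)$, $j\in J$, and the sets
\[
\{x\in H;(x(j))_{j\in J'}\in B\},\qquad J'\subset J \text{ finite},\ B\in\BB(\R^{|J'|}),
\]
form an intersection-stable generator. By the uniqueness theorem for measures it therefore suffices to show $P^{\pi(X)}=P^{\pi(X')}$ for all $\pi\in\PPs(H)$. Alternatively, one can use characteristic functionals: $\E\exp(\mathrm{i}\scalar{t}{X})$ is the limit of $\E\exp(\mathrm{i}\scalar{\pi_k(t)}{X})$ for finite projections $\pi_k(t)\to t$, by dominated convergence, and the characteristic functional determines the distribution.

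Now fix $\pi$ with finite index set $J'$, and consider the event $A=\{I(j)=1~\forall j\in J'\}=\{\pi(I)=\sum_{j\in J'}e_j\}$, which has $P(A)>0$ by assumption. On $A$ we have $\pi(I\odot X)=\pi(X)$. The key step is to recover the law of $\pi(X)$ from that of $\pi(I\odot X)$ alone, since $I$ itself is not part of the compared distributions. This is the main obstacle: an observed zero coordinate could come either from $I(j)=0$ or from $X(j)=0$.

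To handle it I use the hypothesis for all sub-projections and apply inclusion–exclusion on the coordinates. Consider the joint characteristic function $\E\exp(\mathrm{i}\sum_{j\in J'}t_jI(j)X(j))$, which is known for both $X$ and $X'$ and agrees between them. Write $\exp(\mathrm{i}t_jI(j)X(j))=1+I(j)(\exp(\mathrm{i}t_jX(j))-1)$ and expand the product over $j\in J'$. Using independence of $I$ and $X$, this gives
\[
\sum_{K\subset J'}P\bigl(I(j)=1~\forall j\in K\bigr)\,\E\prod_{j\in K}\bigl(e^{\mathrm{i}t_jX(j)}-1\bigr),
\]
and the same identity holds with $X'$ in place of $X$, with the same weights because $I$ is common.

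I then argue by induction on $|J'|$. Assume the quantities $\E\prod_{j\in K}(e^{\mathrm{i}t_jX(j)}-1)$ agree for $X$ and $X'$ for all proper subsets $K\subsetneq J'$; these agreements come from the hypothesis applied to the projections onto $K$. Subtracting those terms leaves
\[
P(A)\,\E\prod_{j\in J'}\bigl(e^{\mathrm{i}t_jX(j)}-1\bigr)=P(A)\,\E\prod_{j\in J'}\bigl(e^{\mathrm{i}t_jX'(j)}-1\bigr).
\]
Dividing by $P(A)>0$ gives equality of these expectations for $J'$. Expanding $\prod_{j\in J'}(e^{\mathrm{i}t_jX(j)}-1)$ expresses the characteristic function of $(X(j))_{j\in J'}$ as a combination of the mixed terms for subsets $K\subset J'$, all of which now agree. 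Hence $P^{\pi(X)}=P^{\pi(X')}$, and the conclusion follows from the reduction to finite-dimensional marginals above.
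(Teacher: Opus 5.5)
Your proof is correct, but its key finite-dimensional step differs from the paper's. The reduction to finite-dimensional marginals is the same in spirit: the paper cites Ditzhaus and Gaigall for it, while you give the $\cap$-stable generator of cylinder sets, or alternatively the characteristic functional. For the step from $\pi(I\odot X)$ to $\pi(X)$, the paper conditions on the event that all indicators equal one, using
\[
P\bigl(X(1)\le x_1,\dots,X(d)\le x_d\bigr)=\frac{P\bigl(I(1)X(1)\le x_1,\dots,I(d)X(d)\le x_d,\ I(1)=\dots=I(d)=1\bigr)}{P\bigl(I(1)=\dots=I(d)=1\bigr)}.
\]
This is shorter and transparent. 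However, the numerator is a functional of the joint law of $(I,I\odot X)$, not of $P^{I\odot X}$ alone. Read literally, it proves the equivalence with $P^{(\pi(I),\pi(I\odot X))}$ on the right-hand side. That version is natural in the paper's data setting, where the $I_i$ are observed, but it leaves implicit how to pass to the marginal law $P^{\pi(I\odot X)}$ in the statement.

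Your argument supplies exactly that passage. You expand $\E\exp(\mathrm{i}\sum_{j\in J'}t_jI(j)X(j))$ into $\sum_{K\subset J'}P(I(j)=1~\forall j\in K)\,\E\prod_{j\in K}(e^{\mathrm{i}t_jX(j)}-1)$, remove the subset terms inductively, and divide by $P(A)>0$. This inclusion--exclusion resolves the ambiguity of zero coordinates. It uses that the weights are common to $X$ and $X'$ because $I$ is the same random variable. It also shows that equality of the laws of the masked vectors alone identifies $P^X$.

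One small slip: in the last step you mean expanding $\prod_{j\in J'}\bigl(1+(e^{\mathrm{i}t_jX(j)}-1)\bigr)=\sum_{K\subset J'}\prod_{j\in K}(e^{\mathrm{i}t_jX(j)}-1)$, not $\prod_{j\in J'}(e^{\mathrm{i}t_jX(j)}-1)$.
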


\begin{proof}
With similar arguments as in  \citet{DitzhausGaigallConsistentTest}, we obtain that  the projections in $\PPs(H)$ determines the  distribution $P^X$ of $X$ uniquely. Precisely,  for all random variables  $X$ and $X'$ in $H$ we have the equivalence
 $$P^X= P^{X'}~~\Leftrightarrow ~~\forall \pi\in\PPs(H): P^{\pi (X)}= P^{\pi (X')}.$$ 
Furthermore, the  distribution of the incomplete data $P^{I\odot X}$  determines the  distribution $P^X$ of $X$ uniquely. From our previous deliberation, it is sufficient to consider only the case  that $J=\{1, \dots,d\}$ for some $d\in\N$, where $P(I(1)=1,\dots,I(d)=1)>0$ is satisfied  and  that $X$ and $I$ as well as  $X'$ and $I$  are independent. Analogously to \citet{g+w-neu}, it is
 $$P(X(1)\le x_1,\dots,X(d)\le x_d)=\frac{P(I(1)X(1)\le x_1,\dots,I(d)X(d)\le x_d,I(1)=1,\dots,I(d)=1)}{P(I(1)=1,\dots,I(d)=1)}$$ 
for all $x_1,\dots,x_d\in\R$, and so the following equivalence
 $$P^X= P^{X'}~~\Leftrightarrow~~P^{I\odot X}= P^{I\odot X'}$$ 
is valid. Combining the  findings we obtain the  result.
 \end{proof}

\subsection{Testing idea}\label{idea}

Under the stated assumptions, we deduce from Theorem \ref{prop:Equal-missingData} that our testing problems under consideration,  written in the form null hypothesis versus alternative, 
$$H_0~~\text{vs.}~~H_1,$$
can be expressed equivalently in the form  
$$\forall \pi\in\PPs(H): H_0(\pi)~~\text{vs.}~~\exists \pi\in\PPs(H): H_1(\pi),$$ 
with suitable testing problems $H_0(\pi)$ vs. $H_1(\pi)$ depending on the projections  $\pi\in\PPs(H)$. Because $\PPs(H)$ consists of projections from $H$ onto the finite dimensional subspaces  of $H$, the latter  testing problems are formulated  in finite dimensional cases and  can be treated  on the basis of the the projected incomplete  data. For illustration, let us consider the following example.

\begin{example}\label{exagof}
Let us go back to the testing problem of goodness-of-fit
whether the underlying distribution $P^X$ belongs to the family of distributions $\PPf^X=\{P^X_\vartheta;\vartheta\in\Theta\}$, that is the testing problem
\begin{equation*}
    H_0: P^X\in\PPf^X~~\text{vs.}~~ H_1:P^X\notin\PPf^X.
\end{equation*}
Introducing the families of distributions $\PPf^{\pi(I\odot X)}=\{P^{\pi(I\odot X)}_\vartheta;\vartheta\in\Theta\}$, $\pi\in\PPs(H)$, and defining the testing problems 
\begin{equation*}
    H_0(\pi): P^{\pi(I\odot X)}\in\PPf^{\pi(I\odot X)}~~\text{vs.}~~ H_1(\pi): P^{\pi(I\odot X)}\notin\PPf^{\pi(I\odot X)},
\end{equation*}
Theorem \ref{prop:Equal-missingData} shows that the testing problem  $H_0$ vs. $H_1$ can be expressed equivalently in the form  $\forall \pi\in\PPs(H): H_0(\pi)$ vs. $\exists \pi\in\PPs(H): H_1(\pi)$, where the  testing problems  $H_0(\pi)$ vs. $H_1(\pi)$,  $\pi\in\PPs(H)$, can be treated on the basis of the  projected incomplete  data
$$(\pi(I_i),\pi(I_i\odot X_i)),~i=1,\dots,n,~\pi\in\PPs(H),$$ 
respectively.
\end{example}

Motivated by the previous deliberations, we consider test statistics of the form 
$$T_n=\int\limits_{\PPs(H)} T_n(\pi)\d Q(\pi)=\sum_{\pi\in \PPs(H)}q(\pi)T_n(\pi).$$
For $\pi\in\PPs(H)$ it is $T_n(\pi)$  a test statistic for the testing problem  $H_0(\pi)$ vs. $H_1(\pi)$ based on the  projected incomplete  data $(\pi(I_i),\pi(I_i\odot X_i))$, $i=1,\dots,n$, where we assume that  $T_n(\pi)$ is non-negative and large values indicate the validity of the alternative $H_1(\pi)$. Moreover, $Q$ is a (discrete) probability measure on $\PPs(H)$ or $q:\PPs(H)\rightarrow [0,1]$ is a weight function that satisfies $\sum_{\pi\in\PPs(H)}q(\pi)=1$, that is  the probability mass function of $Q$ with  $q(\pi)=Q(\{\pi\})$, $\pi\in\PPs(H)$. $Q$ or $q$ can be chosen by the statistician. Similar as in \citet{GaigallWuebbolding_gBM-Test} in the functional data case, testing approaches based on the characteristic function are of particular interest. Given a random variable $Y$ with values in the Hilbert space $H$, the characteristic function of $Y$ is given by
$\varphi_Y(t)= E(\exp(i\scalar{t}{Y})),~t\in H$.
Note that the characteristic function determines the distribution $P^Y$ uniquely. Given independent copies $Y_\ell$, $\ell=1,\dots,n$, of $Y$, the characteristic function can be estimated by the empirical characteristic function of the $Y_\ell$, $\ell=1,\dots,n$, that is 
\begin{equation*}
    \varphi_n(t)=\frac{1}{n}\sum_{\ell=1}^n\exp(i\scalar{t}{Y_\ell}),~t\in H.
\end{equation*}
In applications, the test statistic can be evaluated with the help of a Monte-Carlo simulation. For defining $Q$  and to obtain realizations  $\pi\in \PPs(H) $ for the implementation of the Monte-Carlo procedure, it is possible to use a similar approach as in \citet{DitzhausGaigallConsistentTest},  \citet{DisthausGaigallMarginalHomo}, \citet{GaigallLiangWu-HilbertIndependence} and \citet{GaigallWuebbolding_gBM-Test}. For this purpose, we choose (discrete) distributions  $\nu$ and $\eta$ on $J$. Then, a realization  $\pi$ of  $Q$  is obtained as follows:
\begin{enumerate}
    \item[1.] Generate a realization $k$ of the distribution  $\nu$.
    \item[2.] Independently of the previous step, generate $i_1,\dots,i_k$ by sampling without replacement according to the distribution $\eta$.
    \item[3.] Set $\pi(\cdot) = \sum_{j=1}^k \scalar{\cdot}{e_{i_j}}e_{i_j}$. 
\end{enumerate}

Depending on the significance level $\alpha\in(0,1)$, a $(1-\alpha)$-quantile $c_{n,1-\alpha}$ of the  distribution of the test statistic $T_n$ under the null hypothesis is requested as critical value for the implementation of the test. Unfortunately, the test statistic is in general not distribution free under the null hypothesis, i.e., even under $H_0$, the distribution of  $T_n$ is unknown or depends on unknown parameters. This applies in particular in multivariate, high dimensional or infinite dimensional cases and in situations where the data are incomplete. Consequently, $c_{n,1-\alpha}$ is not available as a critical value in applications. To resolve this problem,  resampling techniques are probably suitable. Let us consider  parametric bootstrap procedures, such as  in \citet{Gaigall03102021}, \citet{BARINGHAUS2023105154} and  \citet{Gaigall-ApplicabilityServeralTests}, for instance. Here, a bootstrap version of the test statistic $T_n^*$ is used that is based on a bootstrap sample. The bootstrap approach estimates underlying distributions on the basis of the observations under the premise that the null hypothesis is true. Denote by  $C_{n,1-\alpha}^*$ a $(1-\alpha)$-quantile of the conditional distribution of $T_n^*$ given the incomplete data $(I_i,I_i\odot X_i)$, $i=1,\dots,n$. This is a random variable based on the data and is available as critical value in applications. In practice, $C_{n,1-\alpha}^*$ is obtained by Monte-Carlo simulation.  The test is given by $\varphi=\1(T_n>C_{n,1-\alpha}^*)$,
i.e., the test rejects the null hypothesis if and only if $T_n>C_{n,1-\alpha}^*$.

\subsection{Example: Testing goodness-of-fit for  normality on incomplete data}\label{exanorm}

For illustration, we scetch the testing problem of goodness-of-fit for  normality on the basis of incomplete data in  \citet{g+w-neu}. There,   a modification of the common BHEP test applicable for random vectors with  missing entries is treated, compare with Example \ref{ex:Hilbertraeume} a) and Example \ref{ex:Incompleteness}  a). We note that the testing approach covers  the case that the dimension of the random vectors is  larger than the sample size. Let $H=\R^d$ equipped with the standard  scalar product $\scalar{x}{x'}=\sum_{j=1}^dx(j)x'(j)$, $(x,x')\in H^2$, and the standard orthonormal basis $e_1=(1,0,\dots,0),\dots,e_d=(0,\dots,0,1)$. Here, we deal with random vectors $X=(X(1),\dots,X(d))$ and $I=(I(1),\dots,I(d))$. We consider the testing problem of goodness-of-fit $H_0: P^X\in\PPf^X$ vs. $H_1:P^X\notin\PPf^X$, where  
$$\PPf^X=\{N_d(\mu,\Sigma);\mu\in\R^d,\Sigma\in\R^{d\times d}~\text{symmetric positive definite}\}$$
is the family of $d$-dimensional normal distributions and $\PPf^{\pi(I\odot X)}$, $\pi\in\PPs(\R^d)$, are the corresponding families of $k$-dimensional normal distributions, $k=1,\dots,d$, compare with Example \ref{exagof}. In what follows, we suppose the assumptions  from Theorem \ref{prop:Equal-missingData} and that $E(\lVert X\rVert^2)<\infty$.  For $a\in\{0,1\}^d$, we define by  $p(a)=P(I=a)$ the probability mass function of $I$ and write $\mu$ and $\Sigma$ for the expectation vector and the covariance matrix of $X$.  Adapting the application of  $\pi\in\PPs(\R^d)$ to  probability mass functions and matrices, it is $\pi(p)$  the probability mass function of $\pi(I)$ and   $\pi(\Sigma)$  the covariance matrix of $\pi(X)$. For $k=1,\dots,d$ and $a\in\{0,1\}^k$,  let  $D_a=\operatorname{diag}(a)\in\R^{k\times k}$ be the diagonal matrix with  diagonal entries $a$. Now, we fix some  $\pi\in\PPs(\R^d)$ with $\pi(\cdot)=\sum_{j\in J'}\scalar{\cdot}{e_j}e_j$ and  $k=|J'|$. The characteristic function of  $\pi(I\odot (X-{\mu}))$ is
\begin{align*}
\varphi_{\pi}(t)=\E\Big(\exp\big(i\scalar{t}{\pi(I\odot (X-{\mu}))}\big)\Big),~t\in \R^{k},
\end{align*}
in general and
\begin{align*}
    \phi_{\pi}(t)
    =\sum_{a\in\{0,1\}^{k}}\exp\left(-\frac{1}{2}\scalar{D_a \pi(\Sigma) D_a t}{t} \right) \pi(p)(a),~t\in\R^{k},
\end{align*}
under the null hypothesis $H_0$, see \citet{g+w-neu}. As a  consequence of  Theorem \ref{prop:Equal-missingData} and  the fact that the characteristic function determines the distribution uniquely, we have that  $ \varphi_{\pi}=\phi_{\pi}$ for all $\pi\in\PPs(\R^d)$ if and only if the null hypothesis $H_0$ is valid. Let us introduce estimators of $\mu$, $\Sigma$ and $p(a)$,  $a\in\{0,1\}^d$, based on the incomplete data $(I_i,I_i\odot X_i)$, $i=1,\dots,n$. Let $\widehat \mu_n$ be a random vector with values in $\R^d$, let $\widehat \Sigma_n$ be  a random symmetric and positive semidefinite  matrix  with values in $\R^{d\times d}$   and let $\widehat p_n(a)$,  $a\in\{0,1\}^d$, be a random probability mass function. Indeed, several choices for the estimators are possible, see \citet{g+w-neu}. Following \citet{g+w-neu}, we define the estimated empirical characteristic function
\begin{equation*}
  \widehat  \varphi_{n,\pi}(t)=\frac{1}{n}\sum_{j=1}^n\exp\big(i\scalar{t}{\pi(I_k\odot (X_k-{\widehat\mu_n}))}\big),~t\in \R^{k},
\end{equation*}
and the estimated characteristic function  under the null hypothesis $H_0$
\begin{align*}
   \widehat \phi_{n,\pi}(t)
    =\sum_{a\in\{0,1\}^{k}}\exp\left(-\frac{1}{2}\scalar{D_a \pi(\widehat\Sigma_n) D_a t}{t} \right) \pi(\widehat p_n)(a),~t\in\R^{k}.
\end{align*}
Denoting by $\Phi_k$  the $k$-dimensional standard normal distribution and setting 
\begin{equation*}
    T_n(\pi)=n\int|  \widehat  \varphi_{n,\pi}(t) -  \widehat \phi_{n,\pi}(t)|^2\d\Phi_k(t),
\end{equation*}
we obtain the BHEP  test statistic in  \citet{g+w-neu} as 
$$T_n=\sum_{\pi\in \PPs(\R^d)}q(\pi)T_n(\pi).$$
For the demand of critical values and  the implementation of the test, a bootstrap procedure is available, see  \citet{g+w-neu} for details.

\section{Outlook}\label{outlook}

Smilar as in \citet{Gaigall03102021}, our test statistics $T_n=\sum_{\pi\in \PPs(H)}q(\pi)T_n(\pi)$ are weighted sums (in the finite dimensional case) or even   series (in the infinite dimensional case) of different test statistics $T_n(\pi)$, $\pi\in\PPs(H)$, and these test statistics are not independent in general because they all depend on the random variables $X_i$, $I_i$, $i=1,\dots,n$; furthermore, $T_n(\pi)$ cannot be chosen as a standard test statistic  because it must  be  suitable for the incomplete  data $(\pi(I_i),\pi(I_i\odot X_i))$, $i=1,\dots,n$.  For that reason, the development and analysis of our test statistic $T_n$ requires independent investigations. 

\medskip
Orienting on existing bootstrap procedures, such as they are discussed in \citet{Gaigall03102021}, \citet{BARINGHAUS2023105154} and  \citet{Gaigall-ApplicabilityServeralTests} for the  testing problems of goodness-of-fit, symmetry, exchangeability, and independence, it is of interest to design new resampling procedures  that are useful and efficient also in the case of incomplete observations, where common techniques fail to work. Following, properties of the newly developed tests have to be investigated.  Asymptotic aspects as the sample size (and possibly simultaneously the dimension) of the observations tends to infinity have to be studied, including distributional or almost sure limits of the novel test statistics under the null hypothesis or under alternatives, the asymptotic size of the tests or the asymptotic power, in particular the consistency, of the tests. For the mathematical investigations, it is possible to  rely on techniques developed in our previous works: In \citet{DitzhausGaigallConsistentTest}, \citet{DisthausGaigallMarginalHomo}, and \citet{GaigallLiangWu-HilbertIndependence}, methods from the theory of $U$-statistics are used, see \citet{Koroljuk}. In \citet{GaigallWuebbolding_gBM-Test}, the application of limit results available for general Hilbert spaces takes place, see \citet{LahaRohatgi}.  

\medskip
Monte-Carlo simulation studies can be used for the investigation of size and power of the new tests in the finite sample case. Thereby, novel insights enables the reduction of the number of replications in the bootstrap procedures in use and finally to speed up the simulations, see \citet{Gaigall08012026}. Because we deal with high or even infinite dimensional situations, the simulations are  computationally challenging and the development of  efficient testing procedures in this regard is requested. Closed-form formulas for the test statistics in use are needed.


\begin{thebibliography}{38}
\providecommand{\natexlab}[1]{#1}
\providecommand{\url}[1]{\texttt{#1}}
\expandafter\ifx\csname urlstyle\endcsname\relax
  \providecommand{\doi}[1]{doi: #1}\else
  \providecommand{\doi}{doi: \begingroup \urlstyle{rm}\Url}\fi

\bibitem[Aleksić and Milošević(2024)]{Aleksić09122024}
Danijel~G. Aleksić and Bojana Milošević.
\newblock To impute or not? testing multivariate normality on incomplete dataset: revisiting the bhep test.
\newblock \emph{Journal of Applied Statistics}, pages 1--18, 2024.
\newblock \doi{10.1080/02664763.2024.2438798}.

\bibitem[Bakirov et~al.(2006)Bakirov, Rizzo, and Székely]{BakirovRizzoSzekely-Inditest-06}
Nail~K. Bakirov, Maria~L. Rizzo, and Gábor~J. Székely.
\newblock A multivariate nonparametric test of independence.
\newblock \emph{Journal of Multivariate Analysis}, 97\penalty0 (8):\penalty0 1742--1756, 2006.
\newblock \doi{10.1016/j.jmva.2005.10.005}.

\bibitem[Baringhaus and Gaigall(2023)]{BARINGHAUS2023105154}
Ludwig Baringhaus and Daniel Gaigall.
\newblock A goodness-of-fit test for the compound poisson exponential model.
\newblock \emph{Journal of Multivariate Analysis}, 195:\penalty0 105154, 2023.
\newblock \doi{10.1016/j.jmva.2022.105154}.

\bibitem[Baringhaus and Henze(1988)]{BaringhausHenze}
Ludwig Baringhaus and Norbert Henze.
\newblock {A consistent test for multivariate normality based on the empirical characteristic function}.
\newblock \emph{Metrika: International Journal for Theoretical and Applied Statistics}, 35:\penalty0 339--348, 1988.
\newblock \doi{10.1007/BF02613322}.

\bibitem[Bugni and Horowitz(2021)]{https://doi.org/10.1002/jae.2846}
Federico~A. Bugni and Joel~L. Horowitz.
\newblock Permutation tests for equality of distributions of functional data.
\newblock \emph{Journal of Applied Econometrics}, 36:\penalty0 861--877, 2021.
\newblock \doi{10.1002/jae.2846}.

\bibitem[Bugni et~al.(2009)Bugni, Hall, Horowitz, and Neumann]{9e51bff2-c3de-37cf-b969-bf2517f82c90}
Federico~A. Bugni, Peter Hall, Joel~L. Horowitz, and George~R. Neumann.
\newblock Goodness-of-fit tests for functional data.
\newblock \emph{The Econometrics Journal}, 12:\penalty0 S1--S18, 2009.
\newblock \doi{10.1111/j.1368-423X.2008.00266.x}.

\bibitem[Cuesta-Albertos and Febrero-Bande(2010)]{RePEc:spr:testjl:v:19:y:2010:i:3:p:537-557}
Juan~Antonio Cuesta-Albertos and Manuel Febrero-Bande.
\newblock {A simple multiway ANOVA for functional data}.
\newblock \emph{TEST: An Official Journal of the Spanish Society of Statistics and Operations Research}, 19:\penalty0 537--557, 2010.
\newblock \doi{10.1007/s11749-010-0185-3}.

\bibitem[Cuesta-Albertos et~al.(2006)Cuesta-Albertos, Fraiman, and Ransford]{Random_projections}
Juan~Antonio Cuesta-Albertos, Ricardo Fraiman, and Thomas Ransford.
\newblock Random projections and goodness-of-fit tests in infinite-dimensional spaces.
\newblock \emph{Bull Braz Math Soc}, 37:\penalty0 477--501, 2006.
\newblock \doi{10.1007/s00574-006-0023-0}.

\bibitem[Cuesta-Albertos et~al.(2007)Cuesta-Albertos, {del Barrio}, Fraiman, and Matrán]{CUESTAALBERTOS20074814}
Juan~Antonio Cuesta-Albertos, Eustasio {del Barrio}, Ricardo Fraiman, and Carlos Matrán.
\newblock The random projection method in goodness of fit for functional data.
\newblock \emph{Computational Statistics \& Data Analysis}, 51:\penalty0 4814--4831, 2007.
\newblock \doi{10.1016/j.csda.2006.09.007}.

\bibitem[Cuevas and Fraiman(2009)]{RePEc:eee:jmvana:v:100:y:2009:i:4:p:753-766}
Antonio Cuevas and Ricardo Fraiman.
\newblock {On depth measures and dual statistics. A methodology for dealing with general data}.
\newblock \emph{Journal of Multivariate Analysis}, 100:\penalty0 753--766, 2009.
\newblock \doi{10.1016/j.jmva.2008.08.002}.

\bibitem[Cuparić and Milošević(2024)]{CuparicMilosevic-adapt-impute}
Marija Cuparić and Bojana Milošević.
\newblock {To impute or to adapt? Model specification tests’ perspective}.
\newblock \emph{Statistical Papers}, 65:\penalty0 1021--1039, 2024.
\newblock \doi{10.1007/s00362-023-01421-4}.

\bibitem[Delaigle and Meister(2021)]{0fdf3827-ba20-3d40-8bdd-32b0388081ee}
Aurore Delaigle and Alexander Meister.
\newblock Nonparametric density estimation for intentionally corrupted functional data.
\newblock \emph{Statistica Sinica}, 31\penalty0 (4):\penalty0 pp. 1915--1934, 2021.
\newblock \doi{10.5705/ss.202018.0484}.

\bibitem[Detke et~al.(2004)Detke, Wiltse, Mallinckrodt, McNamara, Demitrack, and Bitter]{DETKE2004457}
Michael~J. Detke, Curtis~G. Wiltse, Craig~H. Mallinckrodt, Robert~K. McNamara, Mark~A. Demitrack, and Istvan Bitter.
\newblock Duloxetine in the acute and long-term treatment of major depressive disorder: a placebo- and paroxetine-controlled trial.
\newblock \emph{European Neuropsychopharmacology}, 14:\penalty0 457--470, 2004.
\newblock \doi{10.1016/j.euroneuro.2004.01.002}.

\bibitem[Dette and Tang(2026)]{dette2024newenergydistancesstatistical}
Holger Dette and Jiajun Tang.
\newblock New energy distances for statistical inference on infinite dimensional hilbert spaces without moment conditions.
\newblock \emph{Bernoulli}, 2026.
\newblock URL \url{https://www.e-publications.org/ims/submission/BEJ/user/submissionFile/64170?confirm=122adcd9}.

\bibitem[Ditzhaus and Gaigall(2018)]{DitzhausGaigallConsistentTest}
Marc Ditzhaus and Daniel Gaigall.
\newblock A consistent goodness-of-fit test for huge dimensional and functional data.
\newblock \emph{Journal of Nonparametric Statistics}, 30\penalty0 (4):\penalty0 834--859, 2018.
\newblock \doi{10.1080/10485252.2018.1486402}.

\bibitem[Ditzhaus and Gaigall(2022)]{DisthausGaigallMarginalHomo}
Marc Ditzhaus and Daniel Gaigall.
\newblock Testing marginal homogeneity in hilbert spaces with applications to stock market returns.
\newblock \emph{TEST}, 31:\penalty0 749--770, 2022.
\newblock \doi{10.1007/s11749-022-00802-5}.

\bibitem[Ebner and Henze(2020)]{RePEc:spr:testjl:v:29:y:2020:i:4:d:10.1007_s11749-020-00740-0}
Bruno Ebner and Norbert Henze.
\newblock {Tests for multivariate normality—a critical review with emphasis on weighted $L^2$-statistics}.
\newblock \emph{TEST: An Official Journal of the Spanish Society of Statistics and Operations Research}, 29:\penalty0 845--892, 2020.
\newblock \doi{10.1007/s11749-020-00740-0}.

\bibitem[Epps and Pulley(1983)]{Epps-Pulley}
T.~W. Epps and Lawrence~B. Pulley.
\newblock A test for normality based on the empirical characteristic function.
\newblock \emph{Biometrika}, 70\penalty0 (3):\penalty0 723--726, 1983.
\newblock ISSN 00063444.
\newblock \doi{10.2307/2336512}.

\bibitem[Escobar et~al.(2014)Escobar, Klotz, Valdes, and Agudelo]{Escobar}
Juan~S. Escobar, Bernadette Klotz, Beatriz~E. Valdes, and Gloria~M. Agudelo.
\newblock The gut microbiota of colombians differs from that of americans, europeans and asians.
\newblock \emph{BMC Microbiology}, 14:\penalty0 311, 2014.
\newblock \doi{10.1186/s12866-014-0311-6}.

\bibitem[Fong et~al.(2017)Fong, Huang, Lemos, and Mcelrath]{10.1093/biostatistics/kxx039}
Youyi Fong, Ying Huang, Maria~P Lemos, and M~Juliana Mcelrath.
\newblock Rank-based two-sample tests for paired data with missing values.
\newblock \emph{Biostatistics}, 19:\penalty0 281--294, 2017.
\newblock \doi{10.1093/biostatistics/kxx039}.

\bibitem[Gaigall(2020)]{Gaigall-MargHom-Incompl-data}
Daniel Gaigall.
\newblock {Testing marginal homogeneity of a continuous bivariate distribution with possibly incomplete paired data}.
\newblock \emph{Metrika: International Journal for Theoretical and Applied Statistics}, 83:\penalty0 437--465, 2020.
\newblock \doi{10.1007/s00184-019-00742-5}.

\bibitem[Gaigall(2021)]{Gaigall03102021}
Daniel Gaigall.
\newblock On a new approach to the multi-sample goodness-of-fit problem.
\newblock \emph{Communications in Statistics - Simulation and Computation}, 50:\penalty0 2971--2989, 2021.
\newblock \doi{10.1080/03610918.2019.1618472}.

\bibitem[Gaigall(2023)]{Gaigall-ApplicabilityServeralTests}
Daniel Gaigall.
\newblock On the applicability of several tests to models with not identically distributed random effects.
\newblock \emph{Statistics}, 57:\penalty0 300--327, 2023.
\newblock \doi{10.1080/02331888.2023.2193748}.

\bibitem[Gaigall and Gerstenberg(2026)]{Gaigall08012026}
Daniel Gaigall and Julian Gerstenberg.
\newblock On the number of replications in resampling tests and monte carlo simulation studies.
\newblock \emph{The American Statistician}, pages 1--20, 2026.
\newblock \doi{10.1080/00031305.2025.2612197}.

\bibitem[Gaigall and Wübbolding(2025)]{GaigallWuebbolding_gBM-Test}
Daniel Gaigall and Philipp Wübbolding.
\newblock A goodness-of-fit test for geometric {Brownian} motion.
\newblock \emph{Computational Statistics \& Data Analysis}, 210:\penalty0 108196, 2025.
\newblock \doi{10.1016/j.csda.2025.108196}.

\bibitem[Gaigall and Wübbolding(2026)]{g+w-neu}
Daniel Gaigall and Philipp Wübbolding.
\newblock A {BHEP} test for multivariate normality on incomplete data, 2026.
\newblock URL \url{https://arxiv.org/abs/2607.03335}.

\bibitem[Gaigall et~al.(2025)Gaigall, Wu, and Liang]{GaigallLiangWu-HilbertIndependence}
Daniel Gaigall, Shunyao Wu, and Hua Liang.
\newblock A general approach for testing independence in hilbert spaces.
\newblock \emph{Journal of Multivariate Analysis}, 206:\penalty0 105384, 2025.
\newblock \doi{10.1016/j.jmva.2024.105384}.

\bibitem[Goldstein et~al.(2004)Goldstein, Lu, Detke, Wiltse, Mallinckrodt, and Demitrack]{Goldstein}
David~J. Goldstein, Yili Lu, Michael~J. Detke, Curtis Wiltse, Craig Mallinckrodt, and Mark~A. Demitrack.
\newblock Duloxetine in the treatment of depression: a double-blind placebo-controlled comparison with paroxetine.
\newblock \emph{Journal of Clinical Psychopharmacology}, 24:\penalty0 389--399, 2004.
\newblock \doi{10.1097/01.jcp.0000132448.65972.d9}.

\bibitem[Gonz{\'a}lez-Manteiga(2022)]{GonzlezManteiga2022ARO}
Wenceaslao Gonz{\'a}lez-Manteiga.
\newblock A review on specification tests for models with functional data.
\newblock \emph{Spanish Journal of Statistics}, pages 9--40, 2022.
\newblock \doi{10.37830/SJS.2022.1.02}.

\bibitem[Henze and Jiménez-Gamero(2020)]{HenzeGamero}
Norbert Henze and María~Dolores Jiménez-Gamero.
\newblock A test for gaussianity in hilbert spaces via the empirical characteristic functional.
\newblock \emph{Scandinavian Journal of Statistics}, 48:\penalty0 406--428, 05 2020.
\newblock \doi{10.1111/sjos.12470}.

\bibitem[Henze et~al.(2003)Henze, Klar, and Meintanis]{HENZE2003275}
Norbert Henze, Bernhard Klar, and Simos~G. Meintanis.
\newblock Invariant tests for symmetry about an unspecified point based on the empirical characteristic function.
\newblock \emph{Journal of Multivariate Analysis}, 87:\penalty0 275--297, 2003.
\newblock \doi{10.1016/S0047-259X(03)00044-7}.

\bibitem[Koroljukand and Borovskich(1994)]{Koroljuk}
V.~S. Koroljukand and X.V. Borovskich.
\newblock Theory of $u$-statistics.
\newblock Dordrecht: Kluwer Academic Publishers Group, 1994.
\newblock \doi{10.1007/978-94-017-3515-5}.

\bibitem[Kraus(2015)]{repec:bla:jorssb:v:77:y:2015:i:4:p:777-801}
David Kraus.
\newblock Components and completion of partially observed functional data.
\newblock \emph{Journal of the Royal Statistical Society Series B}, 77:\penalty0 777--801, 2015.
\newblock \doi{10.1111/rssb.12087}.

\bibitem[Laha and Rohatgi(1979)]{LahaRohatgi}
R.~G. Laha and V.~K. Rohatgi.
\newblock Probability theory.
\newblock In \emph{Wiley Series in Probability and Mathematical Statistics}. John Wiley \& Sons, Ltd, 1979.

\bibitem[Meintanis et~al.(2016)Meintanis, Allison, and Santana]{RePEc:spr:stpapr:v:57:y:2016:i:4:d:10.1007_s00362-016-0760-0}
Simos~G. Meintanis, James Allison, and Leonard Santana.
\newblock {Goodness-of-fit tests for semiparametric and parametric hypotheses based on the probability weighted empirical characteristic function}.
\newblock \emph{Statistical Papers}, 57:\penalty0 957--976, 2016.
\newblock \doi{10.1007/s00362-016-0760-0}.

\bibitem[Ragot and Ruiz(2008)]{10.1063/1.2981526}
Sébastien Ragot and María~Belén Ruiz.
\newblock Fourier–legendre expansion of the one-electron density matrix of ground-state two-electron atoms.
\newblock \emph{The Journal of Chemical Physics}, 129:\penalty0 124117, 2008.
\newblock \doi{10.1063/1.2981526}.

\bibitem[Romano(1989)]{Romano-Bootstrap-Rand-nonpar-89}
Joseph~P. Romano.
\newblock {Bootstrap and Randomization Tests of some Nonparametric Hypotheses}.
\newblock \emph{The Annals of Statistics}, 17:\penalty0 141 -- 159, 1989.
\newblock \doi{10.1214/aos/1176347007}.

\bibitem[Čoupek et~al.(2024)Čoupek, Dolník, Hlávka, and Hlubinka]{CDHH}
Petr Čoupek, Viktor Dolník, Zdeněk Hlávka, and Daniel Hlubinka.
\newblock Fourier approach to goodness-of-fit tests for {Gaussian} random processes.
\newblock \emph{Statistical Papers}, 65:\penalty0 2937--2972, 2024.
\newblock \doi{10.1007/s00362-023-01510-4}.

\end{thebibliography}
\end{document}